\newtheorem{con}{Conjecture}
\newtheorem{thm}{Theorem}
\newtheorem*{thm*}{Theorem}
\newtheorem{cor}[thm]{Corollary}
\theoremstyle{definition}
\newtheorem{clm}[thm]{Claim}
\newtheorem*{clm*}{Claim}
\newtheorem{obs}[thm]{Observation}
\newtheorem*{obs*}{Observation}
\newcommand{\abs}[1]{\left\vert#1\right\vert}
\DeclareMathOperator{\ex}{ex}
\begin{document}

\title{On the maximum number of odd cycles in graphs without smaller odd cycles\thanks{This work was initiated when the first author was a postdoctoral researcher at the Department of Computer Science and DIMAP, University of Warwick, Coventry CV4 7AL, UK, supported by the Engineering and Physical Sciences Research Council Standard Grant number EP/M025365/1. The visit of the second author at the University of Warwick was supported by the Leverhulme Trust 2014 Philip Leverhulme Prize of Daniel Kr\'al'.}}

\author{Andrzej Grzesik\thanks{Faculty of Mathematics and Computer Science, Jagiellonian University, {\L}ojasiewicza 6, 30-348 Krak\'{o}w, Poland. E-mail: {\tt Andrzej.Grzesik@uj.edu.pl}. The work of this author was supported by the National Science Centre grant number 2016/21/D/ST1/00998.}\and
Bartłomiej Kielak\thanks{Faculty of Mathematics and Computer Science, Jagiellonian University, {\L}ojasiewicza 6, 30-348 Krak\'{o}w, Poland and Institute of Mathematics of the Polish Academy of Sciences, \'Sniadeckich 8, 00-656 Warszawa, Poland. E-mail: \texttt{bartlomiej.kielak@doctoral.uj.edu.pl}.}}

\date{}

\maketitle

\begin{abstract}
We prove that for each odd integer $k \geq 7$, every graph on $n$ vertices without odd cycles of length less than $k$ contains at most $(n/k)^k$ cycles of length $k$.
This extends the previous results on the maximum number of pentagons in triangle-free graphs, conjectured by Erd\H os in 1984, and asymptotically determines the generalized Tur\'an number $\mathrm{ex}(n,C_k,C_{k-2})$ for odd $k$. In contrary to the previous results on the pentagon case, our proof is not computer-assisted.
\end{abstract}

\section{Introduction}

In 1984, Erd\H{o}s \cite{Erdos} conjectured that every triangle-free graph on $n$ vertices contains at most $(n/5)^5$ cycles of length 5 and the maximum is attained at the balanced blow-up of a~$C_5$. Gy\H{o}ri \cite{Gyori} proved an upper bound within a~factor 1.03 of the optimal. Using flag algebras method, Grzesik \cite{Grzesik} and, independently, Hatami, Hladk\'{y}, Kr\'{a}l', Norine, and Razborov \cite{HHKNR} proved that any triangle-free graph on $n$ vertices has at most $(n/5)^5$ copies of $C_5$, which is a~tight bound for $n$ divisible by $5$. Michael \cite{Michael} presented a sporadic counterexample to the characterization of the extremal cases by presenting a graph on $8$ vertices showing that not only a balanced blow-up of a $C_5$ can achieve the maximum. Recently, Lidick\'{y} and Pfender \cite{LP}, also using flag algebras, completely determined the extremal graphs for every $n$ by showing that the graph pointed out by Michael is the only extremal graph which is not a balanced blow-up of a pentagon.

Here, we prove the extension of the above results by showing the following theorem.

\begin{thm}\label{thm:main}
For each odd integer $k\ge7$, any graph on $n$ vertices without odd cycles of length smaller than $k$ contains at most $(n/k)^k$ cycles of length~$k$. Moreover, the balanced blow-up of a $k$-cycle is the only graph attaining this maximum.
\end{thm}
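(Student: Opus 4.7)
Set $m=(k-1)/2$. The starting point is an identity tailored to graphs of odd girth at least $k$. By hypothesis, every closed walk of length $k$ in $G$ must trace a $C_k$ exactly once: any odd closed walk contains an odd cycle as a subgraph, and the shortest odd cycle of $G$ has length at least $k$, so a walk of length $k$ has no room for anything beyond such a cycle. Hence $\mathrm{tr}(A^k)=2k\cdot\#C_k(G)$ and, writing $A^k=A^m\cdot A\cdot A^m$ and re-indexing by a ``middle'' vertex $w$, one obtains
\[
k\cdot\#C_k(G) \;\le\; \sum_{w\in V(G)}\sum_{\{u,u'\}\in E(G)} p_m(w,u)\,p_m(w,u'),
\]
where $p_m(w,v)$ denotes the number of paths of length $m$ from $w$ to $v$. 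Geometrically, each summand $p_m(w,u)\,p_m(w,u')$ counts ordered pairs $(P,P')$ of $m$-paths from $w$ to the endpoints of the edge $\{u,u'\}$; together with that edge, such a pair is a $C_k$ precisely when $P$ and $P'$ are internally vertex-disjoint. In the balanced $C_k$-blowup every such pair automatically has this property, so the above bound is tight there.

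The main task is to bound this double sum by $n^{k}/k^{k-1}$. Writing $f_w(v):=p_m(w,v)$, this is equivalent to showing $\sum_w f_w^{T} A f_w \le 2n^{k}/k^{k-1}$. My plan is to prove it by a carefully weighted edge-level AM/GM: for each $w$, partition the edges of $G$ according to the values of $f_w$ on their endpoints, and bound $f_w^{T} A f_w$ by an averaged $\|f_w\|^2$ weighted by a ``useful'' degree that counts only edges between vertices on which $f_w$ is non-zero. A naive spectral bound $f_w^{T} A f_w \le \lambda_1\|f_w\|^2$ is already off by a factor of $2$ in the extremal blowup, so this refinement is essential. To control the useful-degree averages and $\|f_w\|$, one sets up a parallel inductive bound: since each $m$-path from $w$ consists of an $(m-1)$-path followed by an edge, $p_m(w,u)$ is controlled by the $p_{m-1}(w,\cdot)$ on the neighbours of $u$, reducing the $m$-level bound to an $(m-1)$-level one with the base case $m=0$ trivial.

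I expect the central obstacle to be closing the constant-factor gap in this AM/GM argument. The odd-girth hypothesis enters twice---once to equate walk-counts with path-counts (so that the opening identity involves simple paths rather than all walks), and once to rule out internal overlaps among the half-paths, which would otherwise inflate the right-hand side beyond the cycle count. The restriction $k\ge 7$ (versus $k=5$) is what additionally forbids $C_5$ and thereby forces the distance-$(m-1)$ and distance-$(m+1)$ neighbourhoods of every vertex to have a bipartite-like structure, which is what should make the tight AM/GM go through without any flag-algebra computation.

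Finally, the uniqueness claim follows by tracing equality through every inequality. Equality forces $G$ to be regular of degree $2n/k$, each $f_w$ to be supported on exactly two classes of size $n/k$, and the edges between those classes to form a complete bipartite graph; combined with the odd-girth hypothesis, this pins $G$ down to the balanced blowup of $C_k$.
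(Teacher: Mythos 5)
Your opening reduction is correct and clean: in a graph of odd girth at least $k$, every closed walk of length $k$ traverses a $k$-cycle, so $\mathrm{tr}(A^k)=2k\cdot\#C_k(G)$, and the theorem becomes equivalent to $\mathrm{tr}(A^k)\le 2n^k/k^{k-1}$. But this reformulation does not make the problem easier, and your proposal stops precisely where the difficulty begins. The inequality $\sum_w f_w^{T}Af_w\le 2n^k/k^{k-1}$ is never proved: you describe a ``plan'' (a weighted edge-level AM/GM with a ``useful degree'') and then explicitly state that you ``expect the central obstacle to be closing the constant-factor gap.'' That gap is the entire theorem. The naive spectral bound loses a factor of $2$ in the extremal example, as you note, and nothing in the proposal shows that the proposed refinement recovers it; moreover you would also need sharp control of $\|f_w\|^2=\sum_u p_m(w,u)^2$, which is exactly as hard as the original counting problem. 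There is also a technical slip: $(A^m)_{wu}$ counts walks of length $m=(k-1)/2$, not paths, and the odd-girth hypothesis does not identify the two for even or short lengths (e.g., $w\to a\to w\to u$ is a length-$3$ walk that is not a path); so the passage from $\mathrm{tr}(A^k)$ to the path-count sum, and back, needs care and in any case does not circumvent the missing estimate. The uniqueness discussion inherits the same gap, since it traces equality through inequalities that were never established.

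For comparison, the paper takes a different and fully elementary route, following Kr\'al', Norin, and Volec. To each $k$-cycle it associates $2k$ ``good sequences'' (the cycle with two consecutive vertices transposed), assigns each sequence a weight equal to the reciprocal of the product of the sizes of the candidate sets $A_i(D)$ from which its entries could have been drawn, and shows by a backward induction that the total weight over all good sequences in $G$ is at most $1$ (Claim~\ref{claim:sum_of_all_weights}). Two applications of AM--GM then reduce the theorem to the purely local statement that $\sum_{j}\bigl(\tfrac{n_{1,j}}{2}+\sum_{i\ge2}n_{i,j}\bigr)\le n(k-1)$, which is verified by checking that each vertex contributes at most $k-1$ according to how many neighbours it has on the fixed cycle (Claim~\ref{claim:sum_of_contributions}); the odd-girth hypothesis enters only to limit those neighbourhood patterns. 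If you want to salvage your approach, you would need to actually carry out and verify the tight bound on $\sum_w f_w^{T}Af_w$; as written, the proposal is an outline of a strategy, not a proof.
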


It is worth mentioning that, in contrary to the previous results on the pentagon case, our proof is not using flag algebras and is not computer-assisted. 

Estimating the maximum number of edges in an $H$-free graph on $n$ vertices, called the Tur\'an number of $H$ and denoted by $\ex(n,H)$, is one of the most well-studied problems in graph theory. The original Tur\'an Theorem \cite{Turan} solves it for cliques and the classical Erd\H os–Stone–Simonovits Theorem \cite{ESS} determines the asymptotic behavior of $\ex(n,H)$ for any other non-bipartite graph~$H$. The remaining bipartite case contains many interesting and longstanding open problems, as well as important results, see for example surveys by F\"uredi and Simonovits \cite{FS}, Sidorenko \cite{Sidorenko} or, in the case of cycles, the survey by Verstra\"ete~\cite{Verstraete}.

Generalization of the Tur\'an number, calculating the maximum possible number of copies of a graph $T$ in any $H$-free graph on $n$ vertices, denoted by $\ex(n,T,H)$, is attracting recently a lot of attention. Some specific cases, including the above mentioned case of $\ex(n,C_5,C_3)$, were considered earlier, but systematic studies of this problem were initiated by Alon and Shikhelman \cite{AS}. Especially in the case of cycles many results lately appeared. In particular, Bollob\'as and Gy\H ori \cite{BG} proved that $\ex(n,C_3,C_5) = \Theta(n^{3/2})$, Gy\H ori and Li \cite{GL} extended this result to obtain bounds for $\ex(n,C_3,C_{2k+1})$, which were later improved by Alon and Shikhelman \cite{AS} and by F\"uredi and \"Ozkahya \cite{FO}. Recently, Gishboliner and Shapira \cite{GS} proved a correct order of magnitude of $\ex(n,C_k, C_\ell)$ for each $k$ and $\ell$ and independently Gerbner, Gy\H ori, Methuku, and Vizer \cite{GGMV} for all even cycles, together with the tight asymptotic value of $\ex(n,C_4, C_{2k})$.

Theorem \ref{thm:main} implies the tight asymptotic value of $\ex(n,C_{k}, C_{k-2})$ for all odd~$k$, unknown before.

\begin{cor}
For any odd integer $k\ge7$, $\ex(n,C_{k}, C_{k-2}) = (n/k)^k + o(n^k)$. 
\end{cor}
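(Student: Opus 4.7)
The lower bound $(n/k)^k - o(n^k)$ is supplied by the almost-balanced blow-up of $C_k$: since every odd closed walk in $C_k$ has length at least $k$, this graph has odd girth exactly $k$, in particular is $C_{k-2}$-free, and contains $(n/k)^k$ copies of $C_k$ when $k\mid n$ (and within $o(n^k)$ of this for other $n$).

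For the upper bound, fix $\eps>0$ and let $G$ be any $C_{k-2}$-free graph on $n$ vertices. The plan is to delete $o(n^2)$ edges from $G$ to obtain a subgraph $G'$ with no odd cycle of length less than $k$, so that Theorem~\ref{thm:main} applies to $G'$. The bridge between ``$C_{k-2}$-free'' and ``no odd cycle of length less than $k$'' is the supersaturation estimate $\ex(n,C_\ell,C_{k-2})=o(n^\ell)$ for each odd $\ell\in\{3,5,\ldots,k-4\}$, which follows from the generalized Tur\'an results on cycles cited in the introduction (Bollob\'as--Gy\H{o}ri and Gy\H{o}ri--Li for the case $\ell=3$, together with the general order-of-magnitude results of Gishboliner--Shapira and Gerbner--Gy\H{o}ri--Methuku--Vizer for the other values of $\ell$). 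Given this input, the graph removal lemma applied once for each such $\ell$ produces $G'\subseteq G$ obtained by deleting a total of $o(n^2)$ edges and containing no copy of $C_\ell$ for any odd $\ell\le k-4$; since edge deletion cannot create a $C_{k-2}$, the graph $G'$ remains $C_{k-2}$-free, hence has no odd cycle of length less than $k$. Theorem~\ref{thm:main} then yields at most $(n/k)^k$ copies of $C_k$ in $G'$. Finally, each edge of $G$ lies in at most $O(n^{k-2})$ copies of $C_k$, so deleting $o(n^2)$ edges destroys only $o(n^k)$ copies, and therefore $G$ itself contains at most $(n/k)^k+o(n^k)$ copies of $C_k$, as required.

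The only step that is not pure bookkeeping is the supersaturation estimate $\ex(n,C_\ell,C_{k-2})=o(n^\ell)$ for odd $\ell<k-2$; once this is cited from the literature, the rest of the proof is exactly the multi-step removal-lemma argument that the authors refer to as ``standard.''
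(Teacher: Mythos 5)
Your proposal is correct and is exactly the ``standard application of the graph removal lemma'' that the paper invokes without spelling out: establish $\ex(n,C_\ell,C_{k-2})=o(n^\ell)$ for each odd $\ell\le k-4$, remove $o(n^2)$ edges to kill all shorter odd cycles while preserving $C_{k-2}$-freeness, apply Theorem~\ref{thm:main}, and account for the $o(n^k)$ cycles destroyed, with the blow-up of $C_k$ giving the matching lower bound. The only cosmetic point is that the Gerbner--Gy\H{o}ri--Methuku--Vizer reference concerns even cycles and is not needed here; the required estimate for odd $\ell$ is covered by Gishboliner--Shapira (or follows directly from supersaturation, since $\Omega(n^\ell)$ copies of $C_\ell$ force a blow-up of $C_\ell$, which contains $C_{k-2}$).
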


The proof of the~corollary is a~standard application of the Graph Removal Lemma. If a~large graph $G$ is $C_{k-2}$-free, then, by the Regularity Lemma, it has at most $o(n^{\ell})$ copies of $C_\ell$ for any odd~$\ell$ smaller than $k$. By the Graph Removal Lemma, we can eliminate all the copies by removing $o(n^2)$ edges, thus the number of copies of $C_k$ in a~graph $G$ would change by at most $o(n^k)$.

The considered problem is closely related to the problem of finding the maximum number of induced cycles of a given length. Pippenger and Golumbic \cite{PG} conjectured in 1975 that for each $k\ge5$, any graph on $n$ vertices contains at most $n^k/(k^k-k)$ induced $k$-cycles and the extremal graphs are iterated blow-ups of~$C_k$. This conjecture was confirmed by Balogh, Hu, Lidick\'{y}, and Pfender~\cite{BHLP} for $k=5$. In their original paper, Pippinger and Golumbic proved a general bound for each $k\ge5$ within a multiplicative factor of $2e$. This was recently improved to $128e/81$ by Hefetz and Tyomkyn \cite{HT} and to $2$ by Kr\'{a}l', Norin, and Volec \cite{KNV}. Our main result is based on the method they developed.

\section{Main result}

Fix an odd integer $k \geq 7$ and let $G$ be any graph without $C_\ell$ for all odd $\ell$ between $3$ and $k-2$. Since there are no odd cycles smaller than $k$, each $k$-cycle in $G$ is induced. 

We bound the number of $k$-cycles by bounding the probability that sampling vertices of $G$ one by one at random results in a fixed induced $k$-cycle. However, instead of sampling the vertices in the cycle order, we do it with a small shift and sample the forth vertex before the third. This is to avoid the situation that a particular $3$-vertex induced path in $G$ cannot be extended to a $k$-cycle, which happens, for example, when $G$ is a blow-up of a $k$-cycle. 

For any $k$-cycle $v_0v_1\ldots v_{k-1}$ contained in $G$, by a \emph{good sequence} we denote a sequence $D = (z_i)_{i=0}^{k-1}$, where $z_i = v_i$ for $i \le 1$ and $i \geq 4$, $z_2 = v_3$, and $z_3 = v_2$, i.e., $v_2$ and $v_3$ are in the reversed order. 
Note that there are $2k$ different good sequences corresponding to a single induced $k$-cycle.
For any vertices $v$ and $w$, by $d(v,w)$ we denote the minimum distance between the vertices $v$ and $w$ in $G$. Also, for $v \in V(G)$, write $N(v)$ for the neighborhood of a~vertex $v$.

For a~fixed good sequence $D$, we define the following sets:
\begin{align*}
A_0(D) & = V(G),\\
A_1(D) & = N(z_0),\\
A_2(D) & = \{ w \not\in N(z_0) : d(z_1,w) = 2\},\\
A_3(D) & = N(z_1) \cap N(z_2),\\
A_4(D) & = \{w : z_0z_1z_3z_2w \textrm{ is an induced path}\},\\
A_i(D) & = \{w : z_0z_1z_3z_2z_4 \ldots z_{i-1}w \textrm{ is an induced path}\} \textrm{ for } 5 \leq i \leq k-2,\\
A_{k-1}(D) & = \{w : z_0z_1z_3z_2z_4 \ldots z_{k-2}w \textrm{ is an induced cycle}\}.
\end{align*}

We then define a~\emph{weight} $w(D)$ of a~good sequence $D$ as
\begin{align*}
w(D) = \prod_{i=0}^{k-1} \abs{A_i(D)}^{-1} = \frac 1n \prod_{i=1}^{k-1} \abs{A_i(D)}^{-1}.
\end{align*} 

This quantity has the following probabilistic interpretation: suppose we want to sample $k$ vertices $w_0$, \ldots, $w_{k-1}$, so that $(w_i)_{i=0}^{k-1}$ is a~good sequence. We start with choosing $w_0$ at random from all vertices of $G$. Next, we pick any neighbor of $w_0$ to be $w_1$. In general, $w_i$ is a~random vertex from the set $A_i((w_i)_{i=0}^{i-1})$ (note that the definition of $A_i(D)$ depends only on first $i$ elements of a~sequence~$D$). Then, $w(D)$ is just the probability that the sequence $(w_i)_{i=0}^{k-1}$ obtained in this random process is equal to $D$.

In particular, the sum of weights of all good sequences is at most one, since it is the sum of probabilities of pairwise disjoint events.

Fix a $k$-cycle $v_0v_1\ldots v_{k-1}$ in $G$, let $C=\{v_0,v_1,\ldots, v_{k-1}\}$ be the set of its vertices, and let $D_j = (v_j, v_{j+1}, v_{j+3}, v_{j+2}, v_{j+4}, \ldots, v_{j+k-1})$, for $0 \leq j \leq k-1$, where the indices are considered modulo $k$, be all the good sequences with the same orientation corresponding to this cycle (half of the total number of good sequences corresponding to this cycle). 

If we prove that 
$$\left(2\sum_{j=0}^{k-1} w(D_j)\right)^{-1} \leq M$$
for some number $M$, then $2\sum_{j=0}^{k-1} w(D_j) \geq M^{-1}$. Thus, by summing over all $k$-cycles (with both orientations) and using the fact that the sum of weights of all good sequences is at most one, we get that the total number of $k$-cycles is upper bounded by $M$.

Denote $n_{i,j} = \abs{A_i(D_j)}$. Since
$$\left(2\sum_{j=0}^{k-1} w(D_j)\right)^{-1} = \left(2\sum_{j=0}^{k-1} \prod_{i=0}^{k-1} n_{i,j}^{-1} \right)^{-1} = n\left(\sum_{j=0}^{k-1} \left(\frac{n_{1,j}}{2}\right)^{-1}\prod_{i=2}^{k-1} n_{i,j}^{-1} \right)^{-1},$$
the maximum possible value of
\begin{equation}\label{eq:maximizing_expression}
n\left(\sum_{j=0}^{k-1} \left(\frac{n_{1,j}}{2}\right)^{-1}\prod_{i=2}^{k-1} n_{i,j}^{-1} \right)^{-1}
\end{equation}
is an upper bound on the number of $k$-cycles in $G$.

Using the inequality between harmonic mean and geometric mean of $k$ terms and the inequality between geometric mean and arithmetic mean of $k(k-1)$ terms, we obtain
\begin{eqnarray*}
n\left(\sum_{j=0}^{k-1} \left(\frac{n_{1,j}}{2}\right)^{-1}\prod_{i=2}^{k-1} n_{i,j}^{-1} \right)^{-1} &\le&
\frac{n}{k}\left(\prod_{j=0}^{k-1} \frac{n_{1,j}}{2}\prod_{i=2}^{k-1} n_{i,j} \right)^{\frac{1}{k}}\\
&\le& \frac{n}{k}\left(\frac{1}{k(k-1)}\sum_{j=0}^{k-1} \left( \frac{n_{1,j}}{2}+\sum_{i=2}^{k-1} n_{i,j} \right)\right)^{k-1}.
\end{eqnarray*}

\begin{clm}\label{claim:sum_of_contributions}
The following inequality holds: 
$$\sum_{j=0}^{k-1} \left( \frac{n_{1,j}}{2} + \sum_{i=2}^{k-1} n_{i,j}  \right) \leq n(k-1),$$
with equality if and only if each vertex of $G$ is connected to two vertices of $C$ at distance two. 
\end{clm}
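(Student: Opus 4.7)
\emph{Proof plan.} The strategy is to exchange the order of summation and bound the contribution of each vertex individually. Writing $a_i(w) := \abs{\{j : w \in A_i(D_j)\}}$, define
$$f(w) := \tfrac{1}{2}\,a_1(w) + \sum_{i=2}^{k-1} a_i(w),$$
so that the left-hand side equals $\sum_{w \in V(G)} f(w)$. It thus suffices to show $f(w) \leq k-1$ for every $w \in V(G)$, with equality if and only if $w$ has exactly two neighbors in $C$ at $C$-distance $2$.

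I would first record a short structural lemma, proved directly from the no-odd-cycle-of-length-less-than-$k$ hypothesis: every vertex of $G$ has at most two neighbors in $C$, and when it has two, they are at $C$-distance exactly $2$. Indeed, two $C$-neighbors $v_a, v_b$ of $w$ produce cycles through $w$ of lengths $(b-a)+2$ and $k-(b-a)+2$, of opposite parities since $k$ is odd; the odd one must have length $\geq k$, forcing $C$-distance $2$. A third $C$-neighbor would have to sit at $C$-distance $2$ from two already chosen ones, impossible for $k\geq 7$. Note that $w \in C$ automatically falls into the two-neighbor case, with neighbors at $C$-distance $2$.

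The core of the argument is a case analysis on $\abs{N(w)\cap C}$. When $N(w) \cap C = \{v_\ell, v_{\ell+2}\}$, a direct enumeration gives $a_1(w) = 2$, $a_3(w) = 1$ (from $j = \ell-1$), $a_{k-1}(w) = 1$ (from $j = \ell+2$), and $a_i(w) = 1$ for each $i \in \{4,\ldots,k-2\}$ (from the unique $j = \ell-i+1$; the other candidate $j = \ell-i+3$ places $v_\ell$ in the interior of the induced path and so fails). For $a_2(w)$, the vertices $v_\ell, v_{\ell+2}$ already serve as $C$-based common neighbors, putting $\{\ell-1, \ell+1, \ell+3\} \subseteq \{m : d(v_m, w) = 2\}$, while any external common neighbor $u \in N(w)\setminus C$ adjacent to some $v_m$ produces cycles $w v_\ell \cdots v_m u w$ and $w v_{\ell+2} \cdots v_m u w$ whose odd component must have length $\geq k$, pinning $m$ into the same three-element set. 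Excluding the values $m = \ell+1$ and $m = \ell+3$ (for which $v_{m-1} \in N(w)$), only $m = \ell-1$ survives, so $a_2(w) = 1$ and $f(w) = 1+1+1+(k-5)+1 = k-1$. The cases $\abs{N(w) \cap C} \in \{0,1\}$ are handled analogously: the contributions from $A_1, A_3, A_{k-1}$ drop, and a parallel short-odd-cycle analysis bounds $a_2(w)$ by $3$ (when the distance-$2$ set lies in $\{\ell\pm 1,\ell\pm 3\}$) or by $\lfloor k/2\rfloor$ (when the distance-$2$ set is an independent set in $C_k$, since a pair of consecutive elements would yield a $5$-cycle through $w$ and two distinct witnesses), in each case giving $f(w) < k-1$ strictly.

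The step I expect to be the main obstacle is the control of $a_2(w)$: unlike the other $A_i(D_j)$, which are defined purely via adjacencies, membership in $A_2(D_j)$ depends on the existence of a distance-$2$ witness possibly lying outside $C$. The key technical ingredient is the cycle-length computation above, which shows that any such external witness is forced by the no-short-odd-cycle hypothesis into a configuration already accounted for by the $C$-based common neighbors, so it can never produce an excess contribution that would violate the per-vertex bound $f(w)\le k-1$.
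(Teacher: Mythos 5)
Your proposal is correct and follows essentially the same route as the paper: both exchange the order of summation, bound the per-vertex contribution by $k-1$ via a case analysis on $\abs{N(w)\cap C}\in\{0,1,2\}$, and use the absence of short odd cycles (via parity of the two cycles through $w$ and an arc of $C$) to control both the neighbors of $w$ in $C$ and the set of cycle vertices at distance two from $w$. Your explicit cycle-length computation for the distance-two witnesses is a slightly more detailed substitute for the paper's property $(\star)$, and your bounds in the $0$- and $1$-neighbor cases differ numerically from the paper's but serve the same purpose.
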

\begin{proof}
It is enough to prove that the contribution of any vertex $w \in V(G)$ to the above sum is at most $k-1$, and that such a~contribution can only occur if $w$ is connected to two vertices of $C$ at distance two. 

Notice that any vertex $w\in V(G)$ has at most 2 neighbors in $C$, since otherwise it creates a shorter odd cycle. For the same reason, each vertex $w$ satisfies the following property:

\begin{enumerate}[label=($\star$)]
\item There are at most three vertices in $C$ at distance exactly 2 from $w$, and any two such vertices are not adjacent.\label{no_neighbors_in_N2}
\end{enumerate}

If $w$ has no neighbors in $C$, then, for each $j$, it can contribute only to $n_{2,j}$. Moreover, if for some $j$ we have $d(w, v_{j}) = 2$, then $d(w, v_{j-1}) > 2$ and $d(w, v_{j+1}) > 2$ by \ref{no_neighbors_in_N2}, and so $w$ does not contribute to $n_{2,j}$ and $n_{2,j-2}$. Therefore, such $w$ contributes in total by at most $k-2$.

Assume, then, that $w$ has exactly one neighbor in $C$ --- from symmetry, let it be $v_0$. Because of having only one neighbor, for each $j$, $w$ does not contribute to $n_{3,j}$ and $n_{k-1,j}$. In order to contribute to $n_{i,j}$ for $i \notin \{2, 3, k-1\}$, $w$ needs to be connected to $v_{i+j-1}$, and so it can contribute only to $n_{1,0}$ and $n_{i,k-i+1}$ for $4\le i\le k-2$. Finally, $w$ can contribute to $n_{2,j}$ only if $d(w,v_{j+1}) = 2$ and $w \notin N(v_j)$. By \ref{no_neighbors_in_N2}, there are at most three vertices in $C$ at distance 2 from~$w$, but one of them is $v_1$ and $w \in N(v_0)$, so $w$ contributes to $\sum_{j=0}^{k-1}n_{2,j}$ by at most~2. It follows that in this case $w$ contributes to the considered sum in total by at most $k-3 + \frac 12$.

Finally, assume that $w$ has exactly two neighbors in $C$. These neighbors have to be at distance 2 in $C$, as otherwise it creates an odd cycle of length shorter than $k$. From symmetry, let $v_{k-1}$ and $v_1$ be the neighbors of $w$. Then, $d(w, v_i) = 2$ for $i = k-2$, $0$, $2$, and there are no more $i$ with this property by~\ref{no_neighbors_in_N2}. Therefore, $w$ contributes only to $n_{1,k-1}$, $n_{1,2}$, $n_{2,k-3}$, $n_{3,k-2}$, and $n_{i,k-i}$ for $4\le i\le k-1$, hence $w$ contributes to the considered sum in total by $k-1$.
\end{proof}

Using the above claim, we immediately get the wanted bound $(n/k)^k$ for \eqref{eq:maximizing_expression}. It follows that the total number of $k$-cycles in $G$ is at most $(n/k)^k$, as desired. 

If a graph $G$ is achieving this bound, then $n$ needs to be divisible by $k$ and we need to have equalities in all the inequalities we considered. In particular, for each $k$-cycle, all the other vertices of $G$ need to be connected with exactly two vertices of the cycle, which are at distance~2 (as in the blow-up of a $k$-cycle). Since we used the AM-GM inequality, all the blobs need to have the same size. Thus, one can easily deduce that the only graph attaining the maximum is the balanced blow-up of a $k$-cycle.

In the case of $n$ not divisible by $k$, in order to prove this way an exact bound on the number of $k$-cycles, one cannot use the AM-GM inequalities, but bound~\eqref{eq:maximizing_expression} using Claim~\ref{claim:sum_of_contributions} in a bit more sophisticated way. 
Trying just to maximize~\eqref{eq:maximizing_expression} over all such choices of $n_{i,j} \in \mathbb{N}$ that the inequality from Claim~\ref{claim:sum_of_contributions} holds, one can get higher value than for the numbers $n_{i,j}$ corresponding to blow-ups of a \hbox{$k$-cycle}, but such values may not be realizable by any graph. Therefore, using this approach, one would have to take into consideration also other relations between the numbers $n_{i,j}$. Still, if $n$ is big enough in relation to $k$, then the maximum of~\eqref{eq:maximizing_expression} needs to be achieved when there is an equality in Claim~\ref{claim:sum_of_contributions}. Thus, for $n$ big enough, the only graph achieving the maximum number of \hbox{$k$-cycles} is a~balanced blow-up of a $k$-cycle. 

\section{Concluding remarks and open problems}

In our proof, basically the only place where we are using that $k$ is an odd number is to say that if a $k$-cycle is not induced (or, more generally, there is a short path in the graph between distant vertices of this cycle), then the graph contains a smaller odd cycle. This is not the case if $k$ is an even number. Moreover, we do not have an analogue of Theorem \ref{thm:main} for even $k$, as forbidding any even cycle prevents from having big blow-ups of a~single edge. Nevertheless, one can carefully analyze the proof to obtain the following result on \emph{induced} even cycles.

\begin{obs}
For each even integer $k \geq 8$, any graph on $n$ vertices without induced cycles $C_\ell$ for $\ell = 3$ and $5 \leq \ell \leq k-1$ and without induced $C_6$ with one or two main diagonals contains at most $(n/k)^k$ induced cycles of length $k$.
\end{obs}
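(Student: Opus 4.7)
The plan is to run the proof of Theorem~\ref{thm:main} essentially verbatim, substituting ``some forbidden induced subgraph'' for ``shorter odd cycle'' wherever parity was used. The good-sequence framework, the weight, Claim~\ref{claim:sum_of_all_weights}, and the AM-GM chain all make no reference to the parity of $k$ and apply unchanged, provided every $k$-cycle in the count is induced (which is exactly the content of the observation). The only step that must be revisited is Claim~\ref{claim:sum_of_contributions}, whose proof uses only two structural facts about every $w \in V(G) \setminus C$, where $C$ is the induced $k$-cycle:
\begin{itemize}
\item[(i)] $w$ has at most two neighbors in $C$, and if exactly two they are at $C$-distance $2$;
\item[(ii)] the property $(\star)$: at most three vertices of $C$ lie at distance $2$ from $w$, and no two of them are adjacent.
\end{itemize}

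For (i), three neighbors of $w$ in $C$ would partition $C$ into arcs of lengths $a, b, c \geq 1$ with $a+b+c = k$, each closing through $w$ to an induced cycle of length $a+2$, $b+2$, or $c+2$, all lying in $\{3, 4, \ldots, k\}$. Since lengths $3$ and $5, 6, \ldots, k-1$ are forbidden, each of these must equal $4$ or $k$; the value $k$ forces the remaining arcs to have length $1$, producing forbidden triangles, and the value $4$ uniformly yields $a = b = c = 2$, hence $k = 6$, contradicting $k \geq 8$. An analogous and shorter argument rules out two neighbors of $w$ at $C$-distance other than $2$. For the ``no two adjacent'' half of (ii), two consecutive vertices $v_i, v_{i+1}$ of $C$ at distance $2$ from $w$ through intermediates $x, y$ yield either the induced triangle $xv_iv_{i+1}$ (when $x=y$) or the $5$-cycle $wxv_iv_{i+1}yw$; each of its five potential chords either contradicts $d(w, v_i) = 2$ or $d(w, v_{i+1}) = 2$, or creates an induced triangle, so the $5$-cycle is induced and hence forbidden.

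The delicate part is the ``at most three'' half of $(\star)$. Assume $w$ is at distance~$2$ from four vertices of $C$. The no-two-adjacent statement and a pigeonhole on $C$ force some pair among them to lie at arc-distance $d$ with $2 \leq d \leq \lfloor k/4 \rfloor$; in particular $d \leq k-5$ since $k \geq 8$. Closing the corresponding path through $w$ produces a closed walk of length $d + 4 \in \{6, 7, \ldots, k-1\}$, and a case analysis on its possible chords shows that this walk either is an induced cycle of forbidden length, collapses to a shorter forbidden induced subgraph (a triangle, $C_5$, etc.), or---precisely when $d = 2$---is an induced $C_6$ carrying one or two main diagonals, exactly the configuration explicitly excluded by the hypothesis. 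With (i) and $(\star)$ in hand, the bookkeeping of Claim~\ref{claim:sum_of_contributions} carries over verbatim, yielding the bound $(n/k)^k$. I expect the chord enumeration in the ``at most three'' half to be the main obstacle; the $C_6$-with-diagonals exclusion is engineered precisely to close that final case.
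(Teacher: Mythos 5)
Your reduction to the two structural facts is the right skeleton, and your arguments for (i) (at most two neighbours in $C$, necessarily at arc-distance $2$) and for the ``no two adjacent'' half of $(\star)$ are correct: there the auxiliary cycles through $w$ really are induced, and the allowed lengths $4$ and $k$ force the stated conclusions. The paper itself offers no written proof of the Observation (it only asserts that a careful re-reading of the main argument gives it), so the comparison must be with your own case analysis --- and the ``at most three'' half of $(\star)$, which you correctly single out as the delicate point, is where your sketch breaks down.

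The problem is that a single close pair cannot yield a contradiction, because the local configurations your case analysis would need to forbid actually occur in the extremal graph. Take $d=2$, i.e.\ $v_i$ and $v_{i+2}$ both at distance $2$ from $w$. If the two intermediates coincide ($x=y$, a common neighbour of $w$, $v_i$ and $v_{i+2}$), there is no hexagon at all and the five vertices induce nothing forbidden --- this is exactly what happens in the balanced blow-up of $C_k$, where every non-cycle vertex is at distance $2$ from \emph{three} cycle vertices pairwise at arc-distances $2$ and $4$. If $x\neq y$ and all three main diagonals $wv_{i+1}$, $xv_{i+2}$, $yv_i$ are present, the six vertices induce $K_{3,3}$, whose only induced cycles are $4$-cycles; the hypothesis excludes a $C_6$ with one or two main diagonals but not with three, so this escape is not closed either. (Similarly, for $d=4$ the length-$8$ walk in the blow-up is riddled with chords and collapses only to permitted $C_4$'s, so ``collapses to a shorter forbidden induced subgraph'' is not automatic.) Hence the pigeonhole-on-one-pair strategy cannot establish ``at most three''; any correct argument must exploit the \emph{fourth} distance-$2$ vertex globally --- e.g.\ in the degenerate and $K_{3,3}$ cases one should track the neighbours of $w$ and of the intermediates in $C$ and play two different close pairs against each other until $w$ is forced to have two neighbours in $C$ at arc-distance other than $2$, contradicting (i). As written, this step is a genuine gap rather than a routine verification.
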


It seems possible that the same construction (balanced blow-up of a $k$-cycle) gives the best possible number of induced $k$-cycles also if we only forbid triangles. 

\begin{con}\label{con:induced_cycles}
For each integer $k\ge5$, any triangle-free graph on $n$ vertices contains at most $(n/k)^k$ induced cycles of length $k$.
\end{con}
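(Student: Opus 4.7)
My plan is to adapt the framework developed for Theorem~\ref{thm:main}. For a triangle-free graph $G$ and a fixed induced $k$-cycle $v_0 v_1 \ldots v_{k-1}$, one defines good sequences, weights, and the sets $A_i(D_j)$ exactly as in Section~2. Claim~\ref{claim:sum_of_all_weights} is really a random-sampling fact that holds in any host graph, so it transfers verbatim; likewise the algebraic rearrangement of~\eqref{eq:maximizing_expression} and the two AM-GM applications are unaffected. The entire argument therefore reduces to establishing an analogue of Claim~\ref{claim:sum_of_contributions} in the weaker triangle-free regime, namely that each vertex $w \in V(G)$ contributes at most $k-1$ to $\sum_{j=0}^{k-1} \bigl( n_{1,j}/2 + \sum_{i=2}^{k-1} n_{i,j} \bigr)$.

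The main obstacle is that the two structural facts used in the original Claim~\ref{claim:sum_of_contributions} both fail in the triangle-free setting. A vertex $w \notin C$ may have up to $\lfloor k/2 \rfloor$ neighbors on the induced cycle $C$, namely any independent set of the cyclic ordering, and property~\ref{no_neighbors_in_N2} relied essentially on forbidding shorter odd cycles rather than merely triangles, so a single vertex can sit at distance~$2$ from many consecutive vertices of $C$ simultaneously. Either phenomenon, taken in isolation, can push a vertex's contribution well above $k-1$.

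I would attack this by a case analysis stratified by the set $S \subseteq V(C)$ of neighbors of $w$ on $C$, which is necessarily an independent set in the cycle on $V(C)$. For each such $S$ one should count exactly the pairs $(i,j)$ with $w \in A_i(D_j)$ and try to show the total is at most $k-1$. The lever to pull is that for $i \ge 4$ the set $A_i(D_j)$ demands a genuinely induced path on $i+1$ vertices, so extra neighbors of $w$ on $C$ introduce chords that kill many of these contributions. In parallel, whenever $w$ has many neighbors on $C$ it already participates in many induced $k$-cycles of its own, which suggests that the right replacement for the pointwise bound is a global averaging over pairs (vertex, induced $k$-cycle) rather than a vertex-by-vertex inequality.

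The hardest step will be controlling vertices $w$ with no neighbors on $C$ but many common neighbors with vertices of $C$, because these affect only $\sum_j n_{2,j}$ and are otherwise invisible to the rest of the counting. Overcoming this likely requires either refining $A_2(D_j)$ so that it also records the identity of the distance-$2$ witness (thereby tying $w$'s contribution to specific edges of $G$ instead of to the mere fact of being close to $C$), or replacing the weight scheme entirely by a variant in the spirit of the induced-cycle argument of Kr\'al', Norin, and Volec in~\cite{KNV}. If such a refinement goes through, the divisibility analysis and the characterization of the balanced blow-up of $C_k$ as the unique extremal graph should follow from the same equality conditions used at the end of Section~2.
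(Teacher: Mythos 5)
This statement is not proved in the paper at all: it is stated as Conjecture~\ref{con:induced_cycles}, an open problem, and the authors only report a numerical verification via flag algebras for $k\le 8$ (the case $k=5$ being the theorem of Balogh, Hu, Lidick\'y, and Pfender \cite{BHLP}). So there is no proof in the paper to compare yours against, and your proposal does not close the gap either --- it is a research plan rather than a proof. You correctly diagnose exactly why the method of Section~2 breaks down in the triangle-free regime: a vertex may have up to $\lfloor k/2\rfloor$ neighbours on an induced $k$-cycle, and property~\ref{no_neighbors_in_N2} is really a consequence of forbidding all odd cycles shorter than $k$, not merely triangles, so the pointwise bound of $k-1$ on each vertex's contribution in Claim~\ref{claim:sum_of_contributions} can fail. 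But having identified the obstruction, you do not overcome it: the case analysis over independent sets $S\subseteq V(C)$ is only announced, the proposed ``global averaging over pairs'' is not formulated as a precise inequality, and the treatment of vertices at distance~$2$ from many vertices of $C$ is explicitly left open (``if such a refinement goes through\ldots''). Every step that is actually hard is flagged as hard and then deferred.

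Concretely, the missing idea is a replacement for Claim~\ref{claim:sum_of_contributions} that survives when single vertices can contribute more than $k-1$; without it the chain of inequalities ending in $(n/k)^{k}$ does not assemble. Note also that your suggested fallback --- the weighting of Kr\'al', Norin, and Volec \cite{KNV} --- is known to give only a factor-$2$ approximation to the inducibility bound for general $k$, so invoking it does not by itself yield the tight constant $(n/k)^k$ demanded by the conjecture. Your write-up would be a reasonable opening paragraph of an attack on the problem, but as a proof it has a genuine gap at its central step.
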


Looking from the other side, if we forbid $C_\ell$ for some odd $\ell$ and try to maximize the number of $C_k$ for some larger odd $k$, it seems that, asymptotically, the best is always to take a balanced blow-up of an $(\ell+2)$-cycle. 

\begin{con}\label{con:odd_cycles}
For any odd integers $k>\ell\ge3$, it holds
$\ex(n,C_{k}, C_{\ell}) = \left(\binom{k}{\frac{k-(\ell+2)}{2}} + \binom{k}{\frac{k-3(\ell+2)}{2}} + \binom{k}{\frac{k-5(\ell+2)}{2}} + \ldots\right)\frac{n^k}{(\ell+2)^k} + o(n^k)$.
\end{con}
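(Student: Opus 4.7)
The plan is to combine the weighted-sequence technique of Theorem~\ref{thm:main} with a stability step. For the lower bound, one verifies by direct enumeration that the balanced blow-up $B$ of $C_{\ell+2}$ achieves the claimed count: each $k$-cycle in $B$ corresponds, up to the $2k$-fold symmetry of $C_k$, to a closed walk of length $k$ in $C_{\ell+2}$ together with one vertex chosen from each visited blob. Writing each step as $\pm 1$, closedness forces the number $r$ of $+1$-steps to satisfy $2r-k\equiv 0\pmod{\ell+2}$, so the valid $r$ are exactly $(k-(2s+1)(\ell+2))/2$ for $s=0,1,2,\ldots$, and summing the corresponding $\binom{k}{r}$ gives the coefficient in the conjecture.

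For the upper bound, I would first aim to prove a stability statement: any $C_\ell$-free graph on $n$ vertices with $(1-o(1))$ times the conjectured number of $k$-cycles must be edit-close to a balanced blow-up of $C_{\ell+2}$. Such rigidity should follow from a removal-lemma argument together with the homomorphism-density structure of cycle blow-ups, which are asymptotically the unique maximizers of short-walk densities subject to $C_\ell$-freeness. Given stability, one adapts the good-sequence construction from the proof of Theorem~\ref{thm:main}: to each $k$-cycle $C$ in $G$ assign a good sequence whose coordinates encode a traversal of $C$ compatible with a specific closed walk in $C_{\ell+2}$, define sets $A_i(D)$ of admissible vertices at each step subject to $C_\ell$-freeness, attach the weight $\prod |A_i(D)|^{-1}$, and prove the verbatim analogue of Claim~\ref{claim:sum_of_all_weights} by backward induction. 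Grouping the good sequences by walk-type and applying a double AM-GM as in the proof of Theorem~\ref{thm:main} should then deliver the conjectured upper bound.

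The main obstacle will be the analogue of Claim~\ref{claim:sum_of_contributions}. In Theorem~\ref{thm:main} every $k$-cycle is forced to be induced, each vertex lies on it in at most two positions, and a short case analysis gives the per-vertex bound $k-1$. When only $C_\ell$ is forbidden and $\ell<k-2$, a $k$-cycle may contain chords (creating odd cycles of any length in $\{\ell+2,\ldots,k-2\}$), and a single vertex can simultaneously contribute to many more ``walk-types'' in $C_{\ell+2}$. Establishing the sharp per-vertex-contribution bound---with equality characterizing vertices that fit into a balanced blow-up of $C_{\ell+2}$---is where the real combinatorial difficulty lies and will likely require a refined case analysis organized by the walk-type of the cycles each vertex participates in, combined with the stability input to rule out pathological configurations.
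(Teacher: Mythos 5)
You are attempting to prove Conjecture~\ref{con:odd_cycles}, which the paper itself does not prove --- it offers only numerical flag-algebra evidence for $k\le 7$ --- so there is no proof to compare yours against, and what you have written is a research plan rather than an argument. The decisive gaps are in the upper bound. Once $\ell<k-2$, a $k$-cycle in a $C_\ell$-free graph need not be induced, and the good-sequence machinery of Theorem~\ref{thm:main} leans on inducedness throughout: the sets $A_i(D)$ are defined via induced paths, Claim~\ref{claim:sum_of_all_weights} needs each cycle to generate a controlled family of good sequences, and the case analysis in Claim~\ref{claim:sum_of_contributions} uses that every vertex has at most two neighbours on the cycle and at most three cycle vertices at distance two. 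None of this survives when chords are allowed; moreover a $k$-cycle now projects to one of several walk-types in $C_{\ell+2}$, so even the bookkeeping of how many good sequences a single cycle produces (the analogue of the factor $2k$) is unresolved. You identify the analogue of Claim~\ref{claim:sum_of_contributions} as the main obstacle, but naming the obstacle is not overcoming it, and the proposed ``stability input'' is itself unproved and circular: one cannot assume stability around a conjectured extremal value before that value is established.

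There is also a concrete problem with your lower bound. Counting $k$-cycles in the balanced blow-up of $C_{\ell+2}$ with parts of size $n/(\ell+2)$: the closed walks of length $k$ in $C_{\ell+2}$ from a fixed vertex number $2S$, where $S$ is the conjectured coefficient (your congruence $2r-k\equiv 0\pmod{\ell+2}$ admits $r=\bigl(k\pm(2s+1)(\ell+2)\bigr)/2$, and the two signs give equal binomials); there are $\ell+2$ choices of starting part; and each copy of $C_k$ is traversed by exactly $2k$ rooted oriented closed walks. This yields $\frac{\ell+2}{k}\,S\,\bigl(n/(\ell+2)\bigr)^k(1+o(1))$ copies of $C_k$, which equals the conjectured $S\,\bigl(n/(\ell+2)\bigr)^k$ only when $\ell+2=k$. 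For instance, for $(k,\ell)=(7,3)$ the balanced blow-up of $C_5$ contains $5(n/5)^7(1+o(1))$ seven-cycles, not $\binom{7}{1}(n/5)^7=7(n/5)^7$. So your claimed direct enumeration does not in fact produce the coefficient in the statement; before investing in the upper bound you should pin down this normalization and determine whether your count, the construction, or the stated constant needs to be revised.
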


Using publicly available software Flagmatic \cite{flagmatic}, one can numerically verify that Conjecture \ref{con:induced_cycles} holds for $k\le8$ and Conjecture \ref{con:odd_cycles} holds for $k\le7$.

\end{document}